\documentclass[twoside,10pt,fleqn]{article}
\usepackage{amsmath}
\headsep 0.3 true cm \topmargin 0pt \oddsidemargin -10pt
\evensidemargin -10pt \textheight 240mm \textwidth 170mm

\usepackage{indentfirst,amsmath,amsfonts,amssymb,amsthm,cite}
\usepackage{fancyhdr}
\usepackage[dvips]{graphics}
\usepackage{graphicx}
\usepackage[dvips]{color}
\setlength{\jot}{4.5pt}
\setlength{\parindent}{1.5em}%


\newtheorem{Rem}{Remark}[section]

\newtheorem{theo}{Theorem}[section]

\newtheorem{lem}{Lemma}[section]
\newtheorem{iteration lemma}{iteration Lemma}[section]

\newcommand{\ra}{\rightarrow}
\newcommand{\fr}{\frac}


\def\S{\ensuremath {\mathcal S}}

\def\R{\ensuremath {\mathbb R}}

\begin{document}

\setcounter{page}{1}
\newcounter{jie}
\vspace*{0.4cm}
\begin{center}
{\LARGE\bf {Stochastic Nicholson's blowflies delay differential equation with regime switching}$^{*}$}\\[0.6cm]
\normalsize\bf
Yanling Zhu$^{\dagger}$, Kai Wang$^{\dagger,*}$, Yong Ren$^{\ddagger,*}$, Yingdong Zhuang$^{\dagger}$\\[3mm]
 {\footnotesize\it$^{\dagger}$  School of Statistics and Applied
Mathematics, Anhui University of Finance and Economics,\\ Bengbu
233030, Anhui, China\\
 $^{\ddagger}$ Department of Mathematics, Anhui Normal University,
Wuhu 241000, Anhui, China}
\begin{figure}[b]
\footnotesize
\rule[-2.truemm]{5cm}{0.1truemm}\\[1mm]
{$^*$The research of Yanling Zhu was supported by the NSFC (no. 71803001). The research of Kai Wang was supported by the NSF of Anhui
Province (no. 1708085MA17), and the University Natural Science Research Project of
Anhui Province (no. KJ2018A0437). The research of Yong Ren was supported by the NSFC (no. 11871076)
\\Corresponding author.\\
{e-mails:}\;zhuyanling99@126.com(Y. Zhu); wangkai.math\textit{\char64}163.com(K. Wang); brightry@hotmail.com(Y. Ren)
}
\end{figure}
\end{center}
\noindent\hrulefill \newline
\begin{center}
\begin{minipage}{15cm}
\vspace{-3mm} \noindent{\bf Abstract}\ \ In this paper, we investigate the global existence of almost surely positive
solution to a stochastic Nicholson's blowflies delay differential equation with regime switching, and give the estimation of the path. The results presented in this paper extend some corresponding  results in {\it Wang et al. Stochastic Nicholson's blowflies delayed differential equations, Appl. Math. Lett. 87 (2019) 20-26}.


\noindent {{\bf Keywords}}\ \ {Nicholson's blowflies model, Regime switching,  Global solution }\\[0.1cm]
{\bf AMS(2000)}\ \ 34K50;\ \  60H10;\ \  65C30
\end{minipage}
\end{center}
\vspace{1mm} \noindent\hrulefill \newline
\section{Introduction }
The following delay differential equation, known as the famous Nicholson's blowflies model
\begin{equation*}
X'(t)=-\delta X(t)+pX(t-\tau)e^{-aX(t-\tau)}
\end{equation*}
was presented by Gurney et al. (1990) \cite{Gurney} to model the population of Lucilia cuprina({\it the Australian sheep-blowfly}). The biological meaning of the parameters in this model is that: $\delta$
denotes the adult death rate of per capita daily, $p$
denotes the maximum egg production rate of per capita daily, $\tau$ denotes the generation time, and $1/a$
is the size that  the Lucilia cuprina  reproduces at its maximum rate. Since then, the dynamics of this model was investigated by many researchers,  see  Berezansky et al. (2010) \cite{Berezansky} for an overview.
\par
Considering that  the per capita daily adult
death rate $\delta$ is affected by the white noise of the environment, one often  uses $\delta-\sigma dB(t)$ in lieu of $\delta$, where $\sigma$ denotes the intensity of the white noise and $B(t)$ is a
one-dimensional Brownian motion defined on $(\Omega, \{\mathfrak{F}_t\}_{t\geqslant0}, \mathbb{P})$, which is a complete probability space. Wang et al. (2019)\cite{WangW} studied the following stochastic
Nicholson's blowflies delay differential equation:
\begin{equation}\label{wk0}
dX(t) = [-\delta X(t) + pX(t-\tau )e^{-a X(t-\tau)}]dt + \sigma X(t)dB(t),
\end{equation}
with initial conditions
$X(s) = \phi(s), \; \mbox{for}\; s\in[-\tau, 0], \phi \in C([-\tau, 0], [0,+\infty)), \phi(0) > 0.$ Under the condition $\delta>\sigma^2/2$, they prove that equation \eqref{wk0} has a unique global solution on $[-\tau,\infty)$, which is positive a.s. on $[0,\infty)$, and give the estimations  of $\limsup_{t\rightarrow
\infty}\mathbb{E}[X(t)]$ and $\limsup_{t\rightarrow
\infty}\frac 1t \int_0^t \mathbb{E} [X(s)]ds$, respectively.
\par But we find from numerical simulation that the condition $\delta>\sigma^2/2$ is too strict to the nonexplosion of the solution to equation \eqref{wk0}. For example, let $\delta=1$, $p=5$, $\tau= 1$, $a=1$, $\sigma= 2$, then the numerical simulation in Figure 1(left) shows that the solution $X(t)$ is globally existent on $[0, +\infty)$, and oscillating about $N^*=\log p/\delta$, which is the positive equilibrium of the corresponding deterministic equation of  \eqref{wk0}, but $\delta<\sigma^2/2$.
\par Meanwhile, as one knows that besides the white noises there is a so called telegraph noise in the real ecosystem.  A switching process between two or more regimes can be used to illustrate this type of noise. One can model the regime
switching  by a continuous time Markov chain $(r_t)_{t\geqslant0}$, which takes values in a finite state space $\S= \{1, 2, . . . ,m\}$ and satisfies
\vspace{-2mm}
\[\mathbb{P}(r_{t+\vartriangle} = j|r_t = i) = \left\{
                                        \begin{array}{ll}
                                         & q_{ij}\vartriangle + o(\vartriangle),\; \mbox{for}\; i \not= j,\\
                                         & 1 + q_{ij}\vartriangle + o(\vartriangle),\; \mbox{for}\; i = j,
                                        \end{array}
                                      \right.
as \vartriangle \rightarrow 0^+,\]
 where $Q = (q_{ij}) \in R^{m\times m}$ is the infinitesimal generator, and  $q_{ij}\geqslant0$ denotes the transition rate from $i$ to $j$ for $i \not= j$, and $\sum_{j\in\S} q_{ij}=0$,  $\forall\,i \in \S$. In consideration of
  that the white noise and the telegraph noise are deferent types of noise, we assume that the Markov chain $(r_t)_{t\geqslant0}$ is  independent of the Brownian motion $B(t)$. For more details on the theorem of regime switching system, one can refer to Mao et al. (2006) \cite{mao-06}, Khasminskii et al. (2007) \cite{kh-07} and Yin et al. (2009) \cite{yz-09}.
\par Considering both the effects of  white noise and telegraph noise of environment,
we present the following  stochastic Nicholson's blowflies delay differential equation with regime switching
\begin{equation}\label{wk00}
\aligned
dX(t)=&\left[-\delta_{r_t} X(t)+p_{r_t}X(t-\tau_{r_t})e^{-a_{r_t}X(t-\tau_{r_t})}\right]dt
+\sigma_{r_t} X(t)d B(t)
\endaligned
\end{equation}
with initial conditions:
\begin{equation}\label{wk01}X(s) = \phi(s), \; \mbox{for}\; s\in[-\tau, 0], \phi \in C([-\tau, 0], \R^+), 
\end{equation}
where $\tau=\max_{i\in S}\{\tau_{i}\}$, $\R^+=(0,+\infty)$. The contribution of this paper is as follows:
\vspace{1mm}
\par $\bullet$ Model \eqref{wk00} considered in this paper includes regime switching, which is more general than model \eqref{wk0};
\par $\bullet$ We show  the global existence of almost surely positive solution to equation \eqref{wk00} without the restricted condition $\delta_i>\sigma_i^2/2$, $i\in\S$, which extends the one in \cite{WangW};
\par $\bullet$ We give the  estimations of $\limsup_{t\rightarrow
\infty}\mathbb{E}[X^\theta(t)]$, $\limsup_{t\rightarrow
\infty}\fr1t\int_0^t\mathbb{E} [X^\theta(s)]ds$, $\liminf_{t\rightarrow
\infty}X(t)$ and the sample Lyapunov exponent of $X(t)$, which are more general than the ones in literature.
\vspace{1mm}
\par
For simplicity, in the following and next sections we use the following notations:
  \[\aligned
  &\hat {g}=\min_{i\in\S}\{g_i\},\;\,\check{g}=\max_{i\in\S}\{g_i\},\;\, \mathbb{R}^+_0=[0,\infty),\,\;\beta_{r_t}= \,\theta\left
  [\delta_{r_t}+(1-\theta)\sigma_{r_t}^2/2\right],\;\gamma_{r_t}= {p_{r_t}(a_{r_t}e)^{-1}},\,\;\rho_{r_t}=\delta_{r_t}/\sigma_{r_t}^2,\\
&M_{r_t}=\max_{x\in \R^+_0}\{\theta\gamma_{r_t}x^{\theta-1}-\beta_{r_t}x^\theta\},\,\;W_{r_t}=\max_{ x\in \R^+_0}\{(\alpha-\beta_{r_t})x^\theta+\theta\gamma_{r_t}x^{\theta-1}\},\;\,
0<\alpha<\hat\beta,\;\;d_{r_t}=\delta_{r_t}+\sigma^2_{r_t}/2,\\
& \mu_{r_t}=2\delta_{r_t}-p_{r_t}-\sigma^2_{r_t},\,\;\mbox{and}\;\;C_{i}=\left\{
                 \begin{array}{ll}
                   \frac{\sigma^2_{i}-2\delta_{i}+\sqrt{(\sigma^2_{i}-2\delta_{i})^2+4\gamma^2_{i}}}{2}, & \mbox{if}\;\;\delta_{i}\leqslant \sigma^2_{i}/2, \\
                   \frac{\gamma^2_{i}}{2\delta_{i}-\sigma^2_{i}}, & \mbox{if}\;\;\delta_{i}> \sigma^2_{i}/2,
                 \end{array}
               \right.\;\; i\in\S.
\endaligned
  \]
\\  \textbf{Assumption (A)}: The Markov
chain $(r_t)_{t\geqslant0}$ is irreducible with an invariant distribution $\pi=(\pi_i, i\in\S)$.
\vspace{2mm}
\par Before the main results, we first give two lemmas and omit the proof here for saving layouts.
\begin{lem}\label{lem1} Let
\(
f(x)=xe^{-ax},\,a\in \mathbb{R}^+,x\in\R^+_0
\), then $ f(x)\leqslant(ae)^{-1}$. 
\end{lem}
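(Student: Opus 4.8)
The plan is to treat the inequality as an elementary one-variable optimisation of $f$ over the unbounded interval $\R^+_0=[0,\infty)$. First I would differentiate: $f'(x)=e^{-ax}-axe^{-ax}=e^{-ax}(1-ax)$, which, since $e^{-ax}>0$, has the same sign as $1-ax$. Hence $f$ is strictly increasing on $[0,1/a]$ and strictly decreasing on $[1/a,\infty)$, so its unique global maximiser on $\R^+_0$ is $x^{\ast}=1/a$, giving $f(x)\leqslant f(1/a)=a^{-1}e^{-1}=(ae)^{-1}$ for every $x\in\R^+_0$.

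The only point that needs a word of care is that the domain is unbounded, so a priori the supremum could be approached as $x\to\infty$ rather than attained at a critical point. This is ruled out either by the monotonicity just described, or by observing directly that $f(0)=0$ and $\lim_{x\to\infty}xe^{-ax}=0$, so the continuous function $f$ must attain its maximum at an interior stationary point, of which there is exactly one.

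Alternatively, I could give a proof with no calculus at all, using the standard exponential estimate $e^{t}\geqslant e\,t$ for all $t\in\R$ (which itself follows since $t\mapsto e^{t}-e\,t$ is convex with minimum value $0$ at $t=1$). Applying this with $t=ax$ gives, for $x>0$, $f(x)=\dfrac{x}{e^{ax}}\leqslant\dfrac{x}{e\,ax}=\dfrac{1}{ae}$, while $f(0)=0\leqslant(ae)^{-1}$ trivially, so the bound holds on all of $\R^+_0$ in one line.

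There is essentially no hard step here: the statement is a textbook fact, and the ``main obstacle'' is merely the routine bookkeeping of checking the behaviour at the endpoint $x=0$ and as $x\to\infty$ on the unbounded domain. I would present the derivative-test argument as the main proof, since it additionally exhibits where equality is achieved, namely at $x=1/a$, which will be the relevant point when this lemma is later invoked through the quantity $\gamma_{r_t}=p_{r_t}(a_{r_t}e)^{-1}$.
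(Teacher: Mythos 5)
Your proof is correct: the derivative test shows $f$ attains its global maximum on $[0,\infty)$ at $x=1/a$ with value $(ae)^{-1}$, and your alternative one-line argument via $e^{t}\geqslant et$ is also valid. The paper omits its proof of this lemma entirely ("for saving layouts"), but your calculus argument is evidently the intended one, so there is nothing to reconcile.
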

\begin{lem}\label{lem2} For $ a\in \mathbb{R}$, $ b\in\mathbb{R}^+$,  we have 
\(
\frac{ax^2+bx}{1+x^2}\leqslant C(a)
\) for $x\in \mathbb{R},$
where
  $C(a)=\left\{
                  \begin{array}{ll}
                     ({a+\sqrt{a^2+b^2}})/{2}, & a\geqslant0; \\
                    -{b^2}/{4a}, & a<0.
                  \end{array}
                \right.$
\end{lem}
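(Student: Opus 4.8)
The plan is to treat $g(x):=\dfrac{ax^2+bx}{1+x^2}$ as a function on all of $\R$ and bound its supremum, splitting into the two sign regimes of $a$ that correspond to the two branches of $C(a)$; in both cases the computation reduces to a discriminant condition for a quadratic.

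For $a\geqslant 0$ I would characterise the range of $g$ directly: a value $y$ is attained iff the equation $y(1+x^2)=ax^2+bx$, i.e. $(y-a)x^2-bx+y=0$, has a real root $x$. If $y=a$ this is the linear equation $-bx+a=0$, solved by $x=a/b$ (here the hypothesis $b>0$ enters); if $y\neq a$ it is a genuine quadratic in $x$, whose solvability is equivalent to $b^2-4y(y-a)\geqslant 0$, i.e. $4y^2-4ay-b^2\leqslant 0$. Since the zeros of $4y^2-4ay-b^2$ are $y_\pm=\dfrac{a\pm\sqrt{a^2+b^2}}{2}$, the range of $g$ is exactly the interval $[y_-,y_+]$ (note $a\in[y_-,y_+]$), so $g(x)\leqslant y_+=\dfrac{a+\sqrt{a^2+b^2}}{2}=C(a)$ for every $x$, and this bound is sharp. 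Equivalently, and perhaps cleaner to write out, one checks that with $c:=\dfrac{a+\sqrt{a^2+b^2}}{2}$ one has $c-a>0$ (again using $b>0$) and $b^2-4(c-a)c=0$, whence $c(1+x^2)-(ax^2+bx)=(c-a)x^2-bx+c=(c-a)\bigl(x-\tfrac{b}{2(c-a)}\bigr)^2\geqslant 0$.

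For $a<0$ I would instead complete the square in the numerator: $ax^2+bx=a\bigl(x+\tfrac{b}{2a}\bigr)^2-\tfrac{b^2}{4a}\leqslant-\tfrac{b^2}{4a}$, since $a<0$ forces the squared term to be $\leqslant 0$; note also $-\tfrac{b^2}{4a}>0$. Dividing by $1+x^2\geqslant 1$ then finishes it: when $ax^2+bx\geqslant 0$ we get $g(x)\leqslant ax^2+bx\leqslant-\tfrac{b^2}{4a}$, and when $ax^2+bx<0$ we get $g(x)<0<-\tfrac{b^2}{4a}$; in either case $g(x)\leqslant-\tfrac{b^2}{4a}=C(a)$.

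This is an elementary extremal estimate, so I do not expect a genuine obstacle; the only point requiring care is the $a<0$ branch, where the bound on the numerator cannot be substituted into the quotient blindly — the inequality $g(x)\leqslant ax^2+bx$ holds only when $ax^2+bx\geqslant 0$, so the sign of the numerator must be case-split as above. (One could alternatively reuse the exact range $[y_-,y_+]$ from the first step for all $a$ and then verify $y_+\leqslant-b^2/(4a)$ when $a<0$, but the square-completion argument is shorter and gives the stated constant immediately.)
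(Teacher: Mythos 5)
Your proof is correct and complete. Note that the paper itself omits the proof of this lemma (``we... omit the proof here for saving layouts''), so there is no argument of the authors' to compare against; your range-of-a-rational-function / discriminant computation for $a\geqslant 0$ and the completion of the square for $a<0$ are exactly the standard elementary route, and you correctly handle the one delicate point, namely that for $a<0$ the bound on the numerator can only be divided through by $1+x^2\geqslant 1$ after splitting on the sign of $ax^2+bx$. Your parenthetical observation that the discriminant argument already yields the sharp bound $y_+=\bigl(a+\sqrt{a^2+b^2}\bigr)/2$ for \emph{all} $a$, with $y_+\leqslant -b^2/(4a)$ when $a<0$, also explains why the paper's two-branch constant $C(a)$ is consistent (the $a<0$ branch is simply the weaker but more convenient closed form used for $C_i$ when $\delta_i>\sigma_i^2/2$).
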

\section{Main Results}
In this section, we will show the global existence of almost surely positive solution of equation \eqref{wk00} with initial value \eqref{wk01}, and give some estimations of the solution $X(t)$.

\begin{theo} \label{Th1} 
 For any given initial value \eqref{wk01}, equation \eqref{wk00}  has a unique solution  $X(t)$ on $[-\tau,\infty)$, which is positive on $\mathbb{R}^+_0$ a.s.
\end{theo}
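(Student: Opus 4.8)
The plan is to follow the standard Itô/localization machinery for SDEs with one-sided-growth coefficients, adapted to the regime-switching setting. Since the coefficients of \eqref{wk00} are locally Lipschitz in $X$ (on $\R^+$) for each fixed state $i\in\S$, and the Markov chain $r_t$ is a right-continuous finite-state process independent of $B(t)$, there is a unique maximal local solution $X(t)$ on $[-\tau,\sigma_e)$, where $\sigma_e$ is the explosion time; moreover, because the drift term $p_{r_t}x e^{-a_{r_t}x}$ vanishes and the diffusion term $\sigma_{r_t}x$ vanishes at $x=0$, starting from $\phi(0)>0$ the solution cannot reach $0$ in finite time, so it stays in $\R^+$ on $[0,\sigma_e)$ a.s. The real content of the theorem is to show $\sigma_e=\infty$ a.s., i.e.\ the solution does not blow up to $+\infty$.

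The key step is to find a Lyapunov function $V$ and control $\L V$ uniformly over the regimes. A natural choice given the quantities the authors have pre-defined is $V(x)=x^\theta$ for a suitable $\theta\in(0,1)$, or perhaps $V(x)=1+x^2$ in light of Lemma~\ref{lem2}; I would try $V(x)=1+x^2$ first since $C_i$ is manifestly built from Lemma~\ref{lem2} with $a=\sigma_i^2-2\delta_i$ and $b=2\gamma_i$. Applying the generalized Itô formula for the pair $(X(t),r_t)$, on each interval where $r_t=i$ one computes
\[
\L V(x) = 2x\bl(-\delta_i x + p_i y e^{-a_i y}\br) + \sigma_i^2 x^2,
\]
where $y=X(t-\tau_i)$; using Lemma~\ref{lem1}, $p_i y e^{-a_i y}\le p_i(a_ie)^{-1}=\gamma_i$, hence
\[
\L V(x) \le (\sigma_i^2-2\delta_i)x^2 + 2\gamma_i x = \bl[(\sigma_i^2-2\delta_i)x^2+2\gamma_i x\br]
\]
and after adding and subtracting to match the form $\tfrac{ax^2+bx}{1+x^2}$, one gets $\L V(x) \le C_i\,(1+x^2) + \text{const} = C_i V(x) + K$ with $C_i$ exactly the constant displayed in the notation list and $K$ absorbing lower-order terms; taking $\check C=\max_i C_i$ yields $\L V \le \check C\, V + K$ globally. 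This linear-in-$V$ bound is the crux.

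From there the argument is routine: define stopping times $\tau_n=\inf\{t\in[0,\sigma_e): X(t)\ge n\}$, so $\tau_n\uparrow\sigma_e$; apply Itô to $e^{-\check C t}V(X(t))$, take expectations (the stochastic integral is a genuine martingale up to $\tau_n\wedge T$ since the integrand is bounded there), obtaining $\E\bl[e^{-\check C(\tau_n\wedge T)}V(X(\tau_n\wedge T))\br]\le V(\phi(0)) + \tfrac{K}{\check C}$ (or $V(\phi(0))+KT$ if $\check C$ could be $\le 0$, but the stated $C_i$ are positive, so we keep the exponential). On the event $\{\tau_n\le T\}$ the left side is at least $e^{-\check C T}(1+n^2)\P(\tau_n\le T)$, so $\P(\tau_n\le T)\le \frac{e^{\check C T}(V(\phi(0))+K/\check C)}{1+n^2}\to 0$ as $n\to\infty$; since $T$ is arbitrary, $\P(\sigma_e<\infty)=0$. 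Combining with the earlier observation that $X(t)$ never hits $0$ gives a unique global solution, positive on $\R^+_0$ a.s.

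The main obstacle is bookkeeping rather than conceptual: one must (i) justify the local existence/uniqueness and the strict positivity carefully (the cleanest route to positivity is a comparison with the linear SDE $dZ=-\delta_{r_t}Z\,dt+\sigma_{r_t}Z\,dB$, whose solution is an exponential hence strictly positive, using that the Nicholson drift is nonnegative so $X\ge Z>0$), and (ii) verify that the pre-defined constant $C_i$ really is the sharp output of Lemma~\ref{lem2} applied to $\L V$, i.e.\ that the delayed argument $y=X(t-\tau_i)$ causes no trouble because Lemma~\ref{lem1} bounds its contribution by the constant $\gamma_i$ independently of the delay. Once the bound $\L V\le \check C V + K$ is in hand, no restriction like $\delta_i>\sigma_i^2/2$ is needed — that is precisely how the paper removes the hypothesis of \cite{WangW}.
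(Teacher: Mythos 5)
Your proposal is correct, and it splits naturally into two halves that compare differently with the paper. For positivity you end up in the same place as the authors: they use the method of steps, freezing the delayed term $G(X(t-\tau_{r_i}),r_i)\geqslant 0$ on successive intervals of length $\tau_{r_i}$ between switching times and writing the explicit variation-of-constants solution $X(t)=\Phi(t)\bigl[X(\text{start})+\int \Phi^{-1}(s)G\,ds\bigr]>0$; your comparison $X\geqslant Z$ with $dZ=-\delta_{r_t}Z\,dt+\sigma_{r_t}Z\,dB$ is exactly the first summand of that formula, so this is the same argument in different clothing (your first heuristic, that the coefficients vanish at $x=0$, is not by itself a proof, but the comparison route you then give is). For nonexplosion you genuinely diverge: the paper takes $V(x)=x^{\theta}$ with $\theta\in[1,1+2\hat\rho)$ so that $\beta_{r_t}>0$, whence $\mathcal{L}V\leqslant-\beta_{r_t}x^{\theta}+\theta\gamma_{r_t}x^{\theta-1}\leqslant M_{r_t}$ is bounded by a \emph{constant}, and concludes from $\mathbb{E}[V(X(\Lambda_n\wedge t))]\leqslant V(\phi(0))+M_{r_0}\tau_{r_0}$; you instead take $V(x)=1+x^{2}$ and prove the Khasminskii-type linear bound $\mathcal{L}V\leqslant C_iV$ (indeed Lemma \ref{lem2} with $a=\sigma_i^2-2\delta_i$, $b=2\gamma_i$ gives exactly $C_iV$ with no extra additive constant), then use $e^{-\check C t}V$ and Chebyshev. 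Both are valid and both remove the condition $\delta_i>\sigma_i^2/2$; the paper's choice has the advantage that the same computation with $V=x^\theta$ immediately yields the moment estimates $\limsup_t\mathbb{E}[X^\theta(t)]\leqslant\hat M$ of Theorem 2.2, while your choice recycles the constants $C_i$ that the paper only needs later for the Lyapunov-exponent bound and avoids any discussion of the admissible range of $\theta$. The only point to make explicit in your write-up is that Lemma \ref{lem1} is applied to $y=X(t-\tau_i)$, which requires $y\geqslant 0$, so the positivity step must precede the nonexplosion step, as it does in the paper.
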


\begin{proof}
Since all the coefficients of equation \eqref{wk00}  are locally Lipschitz continuous on $\R^+_0$,
then for any given initial value \eqref{wk01}, 
there is a unique maximal local solution $X(t)$ on $[-\tau,\Lambda_e)$,
where $\Lambda_e$ is the explosion time. Assume that the subsystems of \eqref{wk00} switch along the Markov chain $r_0,r_1,r_2,...,r_n$, and the times on each state are $[0,t_0],[t_0,t_1],[t_1,t_2],...,$ $[t_{n-1},\Lambda_e)$, respectively.
 \par {\bf Step 1.} We first show that $X(t)$ is positive almost surely on $[0,\Lambda_e)$.
System \eqref{wk00} switching along the Markov chain $r_0,r_1,r_2,...,r_n$ gives
\begin{equation}\label{wk02}
\aligned
dX(t)=&\left[-\delta_{r_0} X(t)+G(X(t-\tau_{r_0}), r_0)\right]dt+\sigma_{r_0} X(t)d B(t), \;\;t\in [0,t_0],\\
dX(t)=&\left[-\delta_{r_1} X(t)+G(X(t-\tau_{r_1}), r_1)\right]dt+\sigma_{r_1} X(t)d B(t), \;\;t\in [t_0,t_1],\\[-1mm]
...\\
dX(t)=&\left[-\delta_{r_{n}} X(t)+G(X(t-\tau_{r_n}), r_n)\right]dt+\sigma_{r_n} X(t)d B(t), \;\;t\in [t_{n-1},\Lambda_e),
\endaligned
\end{equation}
where $G(X(t-\tau_{r_i}), r_i)=p_{r_i}X(t-\tau_{r_i})e^{-a_{r_i}X(t-\tau_{r_i})}$. Obviously, each equation is a linear stochastic differential equation, which can be solved step by step, see Mao \cite{mao} (p. 98-99) for more details.
\par In fact, for $t\in[0,\tau_{r_0}]\subset[0,t_0]$, the solution of the first equation of \eqref{wk02} is
\vspace{2mm}
\par
\(\;\;\;\;
X(t)=\Phi_{r_00}(t)\biggl[\phi(0)+\int_0^t\Phi^{-1}_{r_00}(s)G(X(s-\tau_{r_0}), r_0)ds\biggr]>0\;\; a.s.,
\)
\vspace{2mm}
\\
where  $\Phi_{r_0i}(t)=e^{-d_{r_0}(t-i\tau_{r_0})+\sigma_{r_0}[B(t)-B(i\tau_{r_0})]}$, and for $t\in[\tau_{r_0}, 2\tau_{r_0}]\subset[\tau_{r_0},t_0]$ we have
\vspace{1mm}
\par
\(\;\;\;\;
X(t)=\Phi_{r_01}(t)\biggl[X(\tau_{r_0})+\int_{\tau_{r_0}}^t\Phi^{-1}_{r_01}(s)G(X(s-\tau_{r_0}), r_0)ds\biggr]>0\;\; a.s.,
\)
\vspace{1mm}
\par
\(\quad\qquad\;\,
...\)
\vspace{2mm}
\\
and for $t\in [j_0\tau_{r_0},t_0]$, where $j_0=\lfloor\frac{t_0}{\tau_{r_0}}\rfloor$, we get
\vspace{2mm}
\par
\(\;\;\;\;
X(t)=\Phi_{r_0j_0}(t)\biggl[X(j_0\tau_{r_0})+\int_{j_0\tau_{r_0}}^t\Phi^{-1}_{r_0j_0}(s)G(X(s-\tau_{r_0}), r_0)ds\biggr]>0\;\; a.s., \)
\vspace{2mm}
\\
\par For $t\in[t_0, t_0+\tau_{r_1}]\subset[t_0, t_1]$, the solution of the second equation of \eqref{wk02} is
\vspace{2mm}
\par
\(\;\;\;\;
X(t)=\Phi_{r_10}(t)\biggl[X(t_0)+\int_{t_0}^t\Phi^{-1}_{r_10}(s)G(X(s-\tau_{r_1}), r_1)ds\biggr]>0\;\; a.s., 
\)
\vspace{2mm}
\\
where $\Phi_{r_1i}(t)=e^{-d_{r_1}[t-(t_0+i\tau_{r_1})]+\sigma_{r_1}[B(t)-B(t_0+i\tau_{r_1})]}$,  and, for $t\in[t_0+\tau_{r_1}, t_0+2\tau_{r_1}]\subset[t_0+\tau_{r_1}, t_1]$,
\vspace{2mm}
\par
\(\;\;\;\;
X(t)=\Phi_{r_11}(t)\biggl[X(t_0+\tau_{r_1})+\int_{t_0+\tau_{r_1}}^t\Phi^{-1}_{r_11}(s)G(X(s-\tau_{r_1}), r_1)ds\biggr]>0\;\; a.s.,
\)
\vspace{2mm}
\par
\(\quad\qquad\;\,
...\)
\vspace{2mm}
\\
and for $t\in[t_0+j_1\tau_{r_1}, t_1]$ with $j_1=\lfloor\frac{t_1-t_0}{\tau_{r_1}}\rfloor$,
\[
X(t)=\Phi_{r_1j_1}(t)\biggl[X(t_0+j_1\tau_{r_1})+\int_{t_0+j_1\tau_{r_1}}^t\Phi^{-1}_{r_1j_1}(s)G(X(s-\tau_{r_1}), r_1)ds\biggr]>0\;\; a.s. 
\]
Repeating this procedure, we can obtain the explicit solution $X(t)$ of  (\ref{wk02}), which is positive on $[0,\Lambda_e)$ a.s.
 \par {\bf Step 2.} Now, we prove that this solution $X(t)$ to equation \eqref{wk00} with \eqref{wk01} is global existence, that is, the solution will not explode in finite time. We only need to show  $\Lambda_e=\infty$ a.s.
 Let $k_0>0$ be sufficiently large that $\max_{t\in[-\tau,0]}\phi(t)<k_0$, and for each integer $k\geqslant k_0$ define the stopping time
$\Lambda_k=\inf\{t\in[0,\Lambda_e)|\; X(t)\geqslant k\}.$ It is clear that $\Lambda_k$ is increasing as $k\ra\infty.$
Let $\Lambda_\infty=\lim _{k\ra \infty}\Lambda_k$, whence $\Lambda_\infty\leqslant\Lambda_e$ a.s.
 In order to show $\Lambda_e=\infty$,
we only need to prove  that $\Lambda_\infty=\infty$ a.s.   If it is false, then there is a pair of constants $T>0$ and $\epsilon\in(0,1)$ such that
$\mathbb{P}\{\Lambda_\infty\leqslant T\}>\epsilon,$
which yields that  there  exists an integer $k_1\geqslant k_0$ such that
\(\label{n1}
\mathbb{P}\{\Lambda_k\leqslant T\}\geqslant\epsilon\;
\; \mbox{for all}\;\; k\geqslant k_1.
\)

Let
$V(x,i)=x^\theta$ with $\theta\in[1,1+2\rho)$, then
by It\^{o}'s formula, we have
\begin{equation}\label{wangk2}
\aligned dV(X(t),r_t)
=&\left[-\beta_{r_t}X^{\theta}(t)+\theta
X^{\theta-1}(t)G(X(t-\tau_{r_t}),r_t)\right]dt+\theta\sigma_{r_t}
X^{\theta}(t)dB(t)\\
\triangleq& \mathcal{L}V(X(t), r_t)dt+\theta\sigma_{r_t} X^{\theta}dB(t).
\endaligned
\end{equation}
It follows from $X(t)>0$ a.s. on $[0,\Lambda_e)$, $\beta_{r_t}>0$ and  Lemma \ref{lem1} that
$\mathcal{L}V(X(t), r_t)
\leqslant-\beta_{r_t}X^\theta(t)+\theta\gamma_{r_t}X^{\theta-1}(t)\leqslant M_{r_t}.
$
Thus, we have
$
dV(X(t),r_t)\leqslant M_{r_t}dt+\theta\sigma_{r_t} X^{\theta}(t)dB(t).
$
For $t\in[0,\tau_{r_0}]\subset[0,t_0]$, integrating both sides of  above inequality on $[0,\Lambda_n\wedge t]$ and taking expectation lead to
\[
\int_0^{\Lambda_n\wedge t}dV(x(t),r_0)\leqslant\int_0^{\Lambda_n\wedge
t}M_{r_0}dt+\int_0^{\Lambda_n\wedge t}\theta\sigma_{r_0} X^{\theta}(s)dB(s),
\]
then
$
\mathbb{E}[V(\Lambda_n\wedge t,r_0)]\leqslant V(\phi(0))+M_{r_0}\mathbb{E}[(\Lambda_n\wedge  t)]\leqslant
V(\phi(0))+M_{r_0}\tau_{r_0},
$
which yields $\Lambda_\infty\geqslant\tau_{r_0}$ a.s. Similarly, we can obtain that $\Lambda_\infty\geqslant t_0$. Repeating this procedure, we can show $\Lambda_\infty\geqslant t_1$,..., and $\Lambda_\infty\geqslant t_{n-1}+j_n\tau_{r_n}$  a.s. for any  integer $j_n$, which implies $\Lambda_\infty=\infty$ a.s.
 \end{proof}
\begin{theo} \label{Th1} 
 The solution $X(t)$ of equation \eqref{wk00} with initial value \eqref{wk01} has the following properties:
\[\aligned
&\limsup_{t\rightarrow
\infty}\mathbb{E}[X^\theta(t)]\leqslant \hat M, \;\; \limsup_{t\rightarrow
\infty}\frac1t\int_0^t\mathbb{E}[X^\theta(s)]ds\leqslant {\hat W}/\alpha, \;\; \mbox{where}\;\;\theta\in[1,1+2\hat\rho);\;\;\mbox{and}\\
&-\check d \leqslant \liminf_{t\rightarrow\infty}\frac 1t {\log X(t)} \leqslant\limsup_{t\rightarrow\infty}\frac 1t{\log X(t)} \leqslant {\check C}/2\;\;\;
a.s.
\endaligned
\]
\end{theo}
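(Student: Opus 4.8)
My plan is to treat the four assertions in two groups: the two moment estimates will come from the $L^\theta$-Lyapunov computation already recorded in \eqref{wangk2}, while the two almost sure growth rates will come from a pathwise analysis of $\log X(t)$ controlled by the strong law of large numbers for local martingales. Throughout, Lemmas \ref{lem1} and \ref{lem2} supply the uniform pointwise bounds, and the non-explosion established in the previous theorem licenses all the It\^o/expectation manipulations (after the usual localization by the stopping times $\Lambda_k$ and a passage $k\to\infty$ with Fatou's lemma).

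\emph{Moment bounds.} Fix $\theta\in[1,1+2\hat\rho)$. Then $\beta_i=\theta[\delta_i-(\theta-1)\sigma_i^2/2]>0$ for every $i\in\S$ (this is exactly $\theta<1+2\rho_i$), so $\hat\beta>0$ and we may fix $\alpha\in(0,\hat\beta)$. Lemma \ref{lem1} gives $G(X(t-\tau_{r_t}),r_t)\leqslant\gamma_{r_t}$, whence from \eqref{wangk2}
\[
\mathcal{L}V(X(t),r_t)\leqslant-\beta_{r_t}X^\theta(t)+\theta\gamma_{r_t}X^{\theta-1}(t)\leqslant M_{r_t},\qquad \alpha X^\theta(t)+\mathcal{L}V(X(t),r_t)\leqslant(\alpha-\beta_{r_t})X^\theta(t)+\theta\gamma_{r_t}X^{\theta-1}(t)\leqslant W_{r_t}.
\]
Applying It\^o's formula to $e^{\alpha t}X^\theta(t)$ (respectively to $X^\theta(t)$ itself), localizing at $\Lambda_k\wedge t$, taking expectations, letting $k\to\infty$, and dividing by $e^{\alpha t}$ (respectively by $t$) turns these into a linear integral inequality for $\mathbb{E}[X^\theta(t)]$ and one for $t^{-1}\int_0^t\mathbb{E}[X^\theta(s)]\,ds$; solving them and letting $t\to\infty$ yields the two moment estimates.

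\emph{Lower growth rate.} Since $G\geqslant0$, the variation-of-constants representation of $X(t)$ built up interval by interval in the proof of the previous theorem shows that $X(t)\geqslant Y(t)>0$ a.s.\ for all $t\geqslant0$, where $Y$ solves the linear equation $dY=-\delta_{r_t}Y\,dt+\sigma_{r_t}Y\,dB$ with $Y(0)=\phi(0)$, so that
\[
\log Y(t)=\log\phi(0)-\int_0^t d_{r_s}\,ds+\int_0^t\sigma_{r_s}\,dB(s).
\]
The quadratic variation of $\int_0^t\sigma_{r_s}\,dB(s)$ is $\int_0^t\sigma_{r_s}^2\,ds\leqslant\check\sigma^2 t$, so $t^{-1}\int_0^t\sigma_{r_s}\,dB(s)\to0$ a.s.\ by the strong law of large numbers for local martingales, while $t^{-1}\int_0^t d_{r_s}\,ds\leqslant\check d$ (sharper, via Assumption (A) and the ergodic theorem, $\to\sum_{i\in\S}\pi_i d_i\leqslant\check d$). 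Dividing the displayed identity by $t$ and taking $\liminf$ gives $\liminf_{t\to\infty}t^{-1}\log X(t)\geqslant\liminf_{t\to\infty}t^{-1}\log Y(t)\geqslant-\check d$ a.s.

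\emph{Upper growth rate, and the main difficulty.} This is the delicate part. I would apply It\^o's formula to the \emph{state-independent} function $\log(1+X^2(t))$, so that the jump part of the generator drops out, to get
\[
d\log(1+X^2(t))=\left[\frac{-2\delta_{r_t}X^2(t)+2X(t)G}{1+X^2(t)}+\sigma_{r_t}^2\frac{X^2(t)(1-X^2(t))}{(1+X^2(t))^2}\right]dt+\frac{2\sigma_{r_t}X^2(t)}{1+X^2(t)}\,dB(t),
\]
with $G=G(X(t-\tau_{r_t}),r_t)$. Using $\dfrac{X^2(1-X^2)}{(1+X^2)^2}\leqslant\dfrac{X^2}{1+X^2}$ and, by Lemma \ref{lem1}, $G\leqslant\gamma_{r_t}$, the drift is bounded by $\dfrac{(\sigma_{r_t}^2-2\delta_{r_t})X^2(t)+2\gamma_{r_t}X(t)}{1+X^2(t)}$, which by Lemma \ref{lem2} (with $a=\sigma_{r_t}^2-2\delta_{r_t}$, $b=2\gamma_{r_t}$) is at most $C(\sigma_{r_t}^2-2\delta_{r_t})=C_{r_t}\leqslant\check C$. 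Integrating, dividing by $t$, and noting that the martingale term has quadratic variation $\leqslant4\check\sigma^2 t$ (hence negligible sample average) gives $\limsup_{t\to\infty}t^{-1}\log(1+X^2(t))\leqslant\check C$ a.s.; since $\log(1+X^2(t))\geqslant2\log X(t)$, we conclude $\limsup_{t\to\infty}t^{-1}\log X(t)\leqslant\check C/2$ a.s. The genuine obstacle is precisely this step: guessing that $\log(1+X^2)$, and not $\log X$, is the auxiliary function for which the delayed term is absorbed through Lemma \ref{lem1} and the It\^o drift collapses, after Lemma \ref{lem2}, onto the constants $C_i$; the localization behind the moment bounds and the appeals to the martingale strong law are routine (cf.\ Mao \cite{mao}).
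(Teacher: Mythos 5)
Your proof is correct and follows essentially the same route as the paper: the Lyapunov function $V(x)=x^\theta$ with the bounds $M_{r_t}$ and $W_{r_t}$ for the two moment estimates, the auxiliary function $\log(1+x^2)$ combined with Lemmas \ref{lem1}--\ref{lem2} and the constants $C_i$ for the upper Lyapunov exponent, and the nonnegativity of the delayed birth term for the lower one. The only (harmless) deviations are that you control the martingale part of $\log(1+X^2(t))$ via the strong law of large numbers for local martingales where the paper uses the exponential martingale inequality plus Borel--Cantelli, and you derive the lower bound by comparison with the linear SDE $dY=-\delta_{r_t}Y\,dt+\sigma_{r_t}Y\,dB(t)$ rather than by applying It\^o's formula directly to $\log X(t)$ and discarding the nonnegative delayed integral --- both substitutions are valid and yield the same conclusions.
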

\begin{proof}
It follows from It$\hat{o}$ formula that
$
\mathbb{E}[e^tV(X(t))]\leqslant V(\phi(0))+\int_0^t\hat Me^sds=V(\phi(0))+\hat M(e^t-1),
$
which yields
\(\limsup_{t\ra\infty} \mathbb{E}[X^\theta(t)]\leqslant  \hat M\;\,a.s.\) From equation \eqref{wangk2}  we have
$
dV(t)+\alpha X^\theta(t)dt\leqslant W_{r_t}dt+\theta\sigma_i X^{\theta}(s)dB(s).
$
Integrating this inequality from $0$ to $t$ and taking expectation on both sides of it gives
\[
\alpha \int_0^t \mathbb{E} [X^\theta(s)]ds\leqslant  \int_0^tW_{r_s}ds+V(\phi(0)),
\]
which implies
$\limsup_{t\rightarrow \infty}\frac1t\int_0^t\mathbb{E} [X^\theta(s)]ds\leqslant \hat W/\alpha\;\,a.s.$  Applying It$\hat{o}$ formula we have
\begin{equation}\label{r0}
\log(1+X^{2}(t))=\log(1+\phi^{2}(0))+\int_0^tF(X(s),r_s)ds-2\int_0^t\frac{\sigma^2_{r_s}X^4(s)}{(1+X^{2}(s))^2}ds+M(t),
\end{equation}
where $F(X(t),r_t)=\frac{1}{1+X^{2}(t)}[(\sigma^2_{r_t}-2\delta_{r_t})X^{2}(t)+2p_{r_t}X(t)G(t-\tau_{r_t},r_t)]$ and $M(t)=2\int_0^t\frac{\sigma_{r_s}X^2(s)}{1+X^{2}(s)}dB(s)$. It follows from   Lemmas  \ref{lem1}-\ref{lem2}  that
\begin{equation}\label{r1}
F(X(t),r_t)\leqslant[(\sigma^2_{r_t}-2\delta_{r_t})X^{2}(t)+2\gamma_{r_t}X(t)]/({1+X^{2}(t)})
\leqslant C_{r_t}.
\end{equation}
 Meanwhile, the exponential martingale inequality yields, for any $k\in \mathbb{N}$,
$\mathbb{P}\{\sup_{0\leqslant t\leqslant k}[M(t)-2\langle M(t), M(t)\rangle]\geqslant \log \sqrt{k}\}\leqslant 1/k^{2}.$
It follows from $\sum_k k^{-2}<\infty$ and the Borel-Cantelli lemma that there exists an $\Omega_0\subset\Omega$ with $\mathbb{P}(\Omega_0)=1$ such that for every $\omega\in \Omega_0$ there is an integer $k_0=k_0(\omega)$
 such that, for all $k\geqslant k_0$  and $0\leqslant t\leqslant k$, we get
$ \sup_{0\leqslant t\leqslant k}\left[M(t)-2\langle M(t), M(t)\rangle\right]\leqslant \log \sqrt{k}.$ 
  Substituting this inequality and (\ref{r1}) into equation (\ref{r0}), for all $\omega\in \Omega_0$  and $0\leqslant t\leqslant k$, we obtain
\[
\log(1+X^{2}(t))\leqslant\log(1+\phi^{2}(0))+\int_0^tC_{r_s}ds+\log \sqrt{k}.
\]
Thus, for all $\omega\in\Omega_0$ and $t\in[k-1,k]$ we have
\[
\frac1t\log(1+X^{2}(t))\leqslant\frac1{k-1}\left[\log(1+\phi^{2}(0))+\log \sqrt{k}\right]+\frac1t\int_0^tC_{r_s}ds.
\]
By letting $t\rightarrow\infty$, we get
\[
\limsup_{t\rightarrow\infty}\frac1t\log(X^{2}(t))\leqslant\limsup_{t\rightarrow\infty}\frac1t\log(1+X^{2}(t))\leqslant
\limsup_{t\rightarrow\infty}\frac1t\int_0^tC_{r_s}ds\leqslant \check{C}\;\; a.s.
\]
This leads to the desired assertion. It follows from It$\hat{o}$ formula and equation (1.3) that
\begin{equation}\label{wkll}
\log X(t)=\log \phi(0)-\int_0^t\left(\delta_{r_s}+\frac12\sigma^2_{r_s}\right)ds+\int_0^tp_{r_s}\frac{X(s-\tau_{r_s})e^{-a_{r_s}X(s-\tau_{r_s})}}{X(s)}ds+\int_0^t\sigma_{r_s}dB(s),
\end{equation}
which together with the large number theorem for martingales yields
\(
\liminf_{t\ra \infty}\frac1t\log X(t)\geqslant
-\check d.
\)
\end{proof}
\begin{theo} \label{the1}
Assume (A) holds, then the solution $X(t)$ of equation (\ref{wk00}) with (\ref{wk01}) has the  properties:
\[
\limsup_{t\rightarrow
\infty}\frac1t\int_0^t\mathbb{E}[X^\theta(s)]ds\leqslant\frac1\alpha \sum_{i\in\S}\pi_iW_i,\;\;\mbox{where}\;\;\theta\in[1,1+2\hat\rho);\;\;\mbox{and}\;\;\]\vspace{-4.7mm}
\begin{equation*}\label{r00}
-\sum_{i\in\S}\pi_id_i\leqslant\liminf_{t\rightarrow\infty}\frac 1t {\log X(t)} \leqslant\limsup_{t\rightarrow\infty} \frac 1t {\log X(t)} \leqslant\frac12\sum_{i\in\S}\pi_iC_i\;\; a.s.
\end{equation*}
\end{theo}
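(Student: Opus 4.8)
The plan is to sharpen the three estimates of the preceding theorem, replacing the worst‑case constants $\check W,\check C,\check d$ by their $\pi$‑weighted averages, the only new ingredient being the ergodic property of the Markov chain. Under Assumption~(A) the chain $(r_t)_{t\geqslant0}$ is irreducible on the finite set $\S$ with invariant distribution $\pi$, so for any $g:\S\to\R$ the ergodic theorem for continuous‑time Markov chains (see e.g.\ \cite{yz-09}) gives $\lim_{t\to\infty}\frac1t\int_0^t g(r_s)\,ds=\sum_{i\in\S}\pi_i g(i)$ a.s.; moreover, since $g$ takes finitely many values and $\mathbb{P}(r_s=i)\to\pi_i$ as $s\to\infty$, one also has $\mathbb{E}[g(r_s)]=\sum_{i\in\S}g(i)\mathbb{P}(r_s=i)\to\sum_{i\in\S}\pi_i g(i)$, whence the Ces\`aro means $\frac1t\int_0^t\mathbb{E}[g(r_s)]\,ds$ tend to the same limit. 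I would apply this with $g=W_{\cdot}$, $g=C_{\cdot}$ and $g=d_{\cdot}$ in turn.

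For the time‑averaged moment I would start from the inequality $dV(X(t),r_t)+\alpha X^\theta(t)\,dt\leqslant W_{r_t}\,dt+\theta\sigma_{r_t}X^\theta(t)\,dB(t)$ already derived in the proof of the preceding theorem, integrate it over $[0,t]$, localize by the stopping times $\Lambda_k$ to annihilate the stochastic integral, take expectations and let $k\to\infty$ (Fatou) to obtain $\alpha\int_0^t\mathbb{E}[X^\theta(s)]\,ds\leqslant V(\phi(0))+\int_0^t\mathbb{E}[W_{r_s}]\,ds$; dividing by $t$ and letting $t\to\infty$, the first term vanishes and the second tends to $\sum_{i\in\S}\pi_iW_i$, which is the first assertion. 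For the upper sample Lyapunov exponent, the proof of the preceding theorem already yields, on a full‑measure set $\Omega_0$ and for $t\in[k-1,k]$, the bound $\frac1t\log(1+X^2(t))\leqslant\frac1{k-1}\bigl[\log(1+\phi^2(0))+\log\sqrt k\,\bigr]+\frac1t\int_0^tC_{r_s}\,ds$; letting $t\to\infty$ (so $k\to\infty$) the bracketed term disappears and $\frac1t\int_0^tC_{r_s}\,ds\to\sum_{i\in\S}\pi_iC_i$ a.s.\ by the ergodic theorem, so, using $\log X^2\leqslant\log(1+X^2)$, we get $\limsup_{t\to\infty}\frac1t\log X(t)\leqslant\frac12\sum_{i\in\S}\pi_iC_i$ a.s.

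For the lower bound I would invoke the representation \eqref{wkll}, namely $\log X(t)=\log\phi(0)-\int_0^t d_{r_s}\,ds+\int_0^t p_{r_s}\frac{X(s-\tau_{r_s})e^{-a_{r_s}X(s-\tau_{r_s})}}{X(s)}\,ds+\int_0^t\sigma_{r_s}\,dB(s)$ (recall $d_{r_s}=\delta_{r_s}+\sigma^2_{r_s}/2$): the third integrand is nonnegative, the local martingale $\int_0^t\sigma_{r_s}\,dB(s)$ has quadratic variation bounded by a constant multiple of $t$, so its time average tends to $0$ a.s.\ by the strong law of large numbers for martingales (see e.g.\ \cite{mao}), and $\frac1t\int_0^t d_{r_s}\,ds\to\sum_{i\in\S}\pi_id_i$ a.s.; dividing by $t$ and taking $\liminf$ yields $\liminf_{t\to\infty}\frac1t\log X(t)\geqslant-\sum_{i\in\S}\pi_id_i$ a.s.

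All the analytic machinery---It\^{o}'s formula, the exponential‑martingale and Borel--Cantelli estimates, the localization and Fatou step---is identical to that of the preceding theorem and need not be reproduced; the genuinely new point, and the only one that needs care in the write‑up, is the passage from the constants $\check W,\check C,\check d$ to the $\pi$‑averages via the ergodic theorem, together with the observation that uniform boundedness of $W_{\cdot},C_{\cdot},d_{\cdot}$ over the finite set $\S$ legitimizes the interchange of limit and expectation in the first estimate. I do not expect a real obstacle here: the work is essentially bookkeeping, in particular intersecting the several almost‑sure events (from the martingale law of large numbers, from the ergodic theorem, and from the positivity and non‑explosion already established) into a single full‑probability set.
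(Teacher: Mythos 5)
Your proposal is correct and follows essentially the same route as the paper: the paper's own proof consists precisely of reusing the estimates from the preceding theorem and replacing the worst-case constants by $\pi$-weighted averages via the ergodic theorem for the irreducible finite-state chain, exactly as you do. Your write-up is in fact more careful than the paper's (which silently treats $\int_0^t W_{r_s}\,ds$ as if it were $\int_0^t\mathbb{E}[W_{r_s}]\,ds$ in the first estimate); your explicit appeal to $\mathbb{P}(r_s=i)\to\pi_i$ and the Ces\`aro argument closes that small gap.
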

\begin{proof} The Markov $r_t$  has an invariant distribution $\pi=(\pi_i, i\in\S)$, thus we have
$
\limsup_{t\rightarrow\infty}\frac1{t}\int_0^tW_{r_s}ds= \sum_{i\in\S}\pi_iW_i,$ $\limsup_{t\rightarrow\infty}\frac1{t}\int_0^tC_{r_s}ds=\sum_{i\in\S}\pi_iC_i$ and
$\limsup_{t\ra \infty}\frac1t\int_0^t(\delta_{r_s}+\frac12\sigma^2_{r_s})ds=\sum_{i\in\S}\pi_id_i.$
\end{proof}
\begin{Rem}
 These estimations presented in Theorem \ref{the1} are dependent on the stationary distribution $\pi$ of the Markov chain $(r_t)_{t\geqslant0}.$ Especially, these estimations are also suitable for the case of $\delta\leqslant\frac12\sigma^2$, which fills in the gap of the corresponding results in  Wang et.al \cite{WangW}.
\end{Rem}

\begin{theo} Assume (A) holds,
then the solution $X(t)$ of model \eqref{wk00} with initial value \eqref{wk01} satisfies
\begin{equation}\label{kl1}
0\leqslant\liminf_{t\ra\infty} X(t)\leqslant \sum_{i\in\S}\pi_i\gamma_i/ \sum_{i\in\S}\pi_id_i:= N^*\;\;a.s.$$
\end{equation}
\end{theo}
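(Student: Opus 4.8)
The plan is to argue by contradiction, using the logarithmic representation \eqref{wkll} of $\log X(t)$. Since $X(t)>0$ a.s. on $\R^+_0$ by Theorem \ref{Th1}, the inequality $\liminf_{t\to\infty}X(t)\geqslant0$ is immediate, so only the upper bound by $N^*$ needs an argument. I would suppose, for contradiction, that there is an event of positive probability on which $\liminf_{t\to\infty}X(t)>N^*$; on each such sample path there are (path-dependent) constants $\e>0$ and $T>0$ with $X(t)\geqslant N^*+\e$ for all $t\geqslant T$.

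Next I would divide \eqref{wkll} by $t$ and pass to the limit term by term. The drift term $\frac1t\int_0^t(\delta_{r_s}+\frac12\sigma^2_{r_s})\,ds=\frac1t\int_0^t d_{r_s}\,ds$ converges to $\sum_{i\in\S}\pi_i d_i$ by the ergodic theorem for the irreducible finite-state Markov chain $(r_t)$ (Assumption (A)); the stochastic integral $\frac1t\int_0^t\sigma_{r_s}\,dB(s)$ tends to $0$ a.s. by the strong law of large numbers for martingales, since its quadratic variation $\int_0^t\sigma^2_{r_s}\,ds\leqslant\check\sigma^2 t$ grows at most linearly (there are only finitely many values $\sigma_i$); and for the nonlinear recruitment term I would apply Lemma \ref{lem1} to get $p_{r_s}X(s-\tau_{r_s})e^{-a_{r_s}X(s-\tau_{r_s})}\leqslant p_{r_s}(a_{r_s}e)^{-1}=\gamma_{r_s}$, so that for $s\geqslant T$ the integrand $\gamma_{r_s}/X(s)$ is at most $\gamma_{r_s}/(N^*+\e)$. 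Splitting the integral at $T$, the part over $[0,T]$ is a fixed finite quantity and vanishes after dividing by $t$, while the part over $[T,t]$ is bounded by $\frac1{N^*+\e}\cdot\frac1t\int_0^t\gamma_{r_s}\,ds$, which converges to $\frac1{N^*+\e}\sum_{i\in\S}\pi_i\gamma_i$, again by ergodicity.

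Collecting these limits gives $\limsup_{t\to\infty}\frac1t\log X(t)\leqslant-\sum_{i\in\S}\pi_i d_i+\frac1{N^*+\e}\sum_{i\in\S}\pi_i\gamma_i$ on the assumed event. Substituting the definition of $N^*$, namely $\sum_{i\in\S}\pi_i\gamma_i=N^*\sum_{i\in\S}\pi_i d_i$, the right-hand side becomes $\big(\frac{N^*}{N^*+\e}-1\big)\sum_{i\in\S}\pi_i d_i=-\frac{\e}{N^*+\e}\sum_{i\in\S}\pi_i d_i$, which is strictly negative because $d_i=\delta_i+\sigma^2_i/2>0$ for every $i\in\S$. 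Hence $\log X(t)\to-\infty$, i.e. $X(t)\to0$, on that event, contradicting $X(t)\geqslant N^*+\e>0$ for $t\geqslant T$. Therefore the event has probability zero, which yields $\liminf_{t\to\infty}X(t)\leqslant N^*$ a.s.

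I do not expect a genuine obstacle: this is the standard ``if the population stays bounded below it is forced into exponential decay'' contradiction. The points that need some care are (i) verifying the hypotheses of the martingale strong law, which reduce to the linear-in-$t$ bound on $\langle M\rangle_t$ coming from boundedness of the finitely many $\sigma_i$; (ii) handling the recruitment term correctly, since the bound $X(s)\geqslant N^*+\e$ holds only past the random time $T$, so the integral must be split and its transient part discarded after normalising by $t$; and (iii) invoking the time-average (pathwise) form of ergodicity of $(r_t)$ for each of the functionals $d_{r_s}$ and $\gamma_{r_s}$, which is precisely what Assumption (A) supplies.
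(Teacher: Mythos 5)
Your proposal is correct and follows essentially the same route as the paper's proof: a contradiction argument via the It\^o/logarithmic representation of $\log X(t)$, the bound $p_{r_s}X(s-\tau_{r_s})e^{-a_{r_s}X(s-\tau_{r_s})}\leqslant\gamma_{r_s}$ from Lemma \ref{lem1}, the martingale strong law for the stochastic integral, and the ergodic theorem for $(r_t)$ to conclude a strictly negative Lyapunov exponent forcing $X(t)\to0$. If anything, your handling of the negation (a positive-probability event with path-dependent $\e$, $T$, and the splitting of the integral at $T$) is more careful than the paper's, which loosely asserts that $\liminf_{t\to\infty}X(t)$ equals a constant $N^*+2\e$ a.s.
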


\begin{Rem} Inequality \eqref{kl1} means that if the daily egg production rate goes to 0, then the limit inferior of the population of the sheep-blowfly will go to 0. In fact, one can see from \eqref{kl1} that $p_i\ra 0$, $\forall\,i\in\S$ lead to $N^*\ra 0$, which yields
$\inf X(t)\ra 0$ as $t\ra\infty$ a.s.
\end{Rem}

\begin{proof}  If $\liminf_{t\ra\infty} X(t)>N^*$ a.s., then there exists a positive constant $\varepsilon$ such that $\liminf_{t\ra\infty}  X(t)=N^*+2\varepsilon$ a.s., and for this  $\varepsilon$ there exists a constant $T>0$ so that
$X(t)\geqslant  N^*+\varepsilon$ for all $t>T$ a.s. It follows from the It$\hat{o}$ formula and equation (\ref{wk00}) with (\ref{wk01}) that\vspace{-2mm}
\[
\log X(t)\leqslant\log \phi(0)+\int_0^t\biggl(-d_{r_s}+\frac{\gamma_{r_s}}{X(s)}\biggr)ds+\int_0^t\sigma_{r_s}dB(s)\;\;a.s.
\]\vspace{-2mm}
Thus \vspace{-2mm}
\[
\limsup_{t\ra\infty}\frac1t\log X(t)\leqslant\limsup_{t\ra\infty}\frac1t\int_0^t\biggl(-d_{r_s}+\frac{\gamma_{r_s}}{N^*+\varepsilon}\biggr)ds
=\sum_{i\in\S}\pi_i\left(-d_{i}+\frac{\gamma_{i}}{N^*+\varepsilon}\right)<0\;\;a.s.
\]
which yields $\limsup_{t\ra\infty} X(t)=0$, a contradiction.
\end{proof}

\begin{theo} Assume (A) holds, then the solution $X(t)$ of equation (\ref{wk00}) with initial value (\ref{wk01}) has the properties: if $\forall\,i\in\S$, $p_i=0$, then \vspace{-1mm}
\[\limsup_{t\ra\infty}\frac1t\log X(t)\leqslant-\sum_{i\in\S}\pi_id_i<0\;\;\mbox{and}\;\;\lim_{t\ra\infty}  X(t)=0\;\; a.s.;\]
\vspace{-2mm}
 if $\tau_i=0$, $\forall\,i\in\S$, then
\[\limsup_{t\ra\infty}\frac1t\log X(t)\leqslant-\sum_{i\in\S}\pi_i(d_i-p_i):=-\lambda,\] further if  $\lambda>0$,  then $\lim_{t\ra\infty}  X(t)=0$; and if $\lambda=0$, then $\limsup_{t\ra\infty}  X(t)\leqslant1$ a.s.
\end{theo}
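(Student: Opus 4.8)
The plan is to mimic the logarithmic comparison technique already used in the previous theorems, applying It\^{o}'s formula to $\log X(t)$, but now pushing the estimates into both of the two degenerate regimes. First, recall from \eqref{wkll} that
\[
\log X(t)=\log \phi(0)-\int_0^t\left(\delta_{r_s}+\tfrac12\sigma^2_{r_s}\right)ds+\int_0^tp_{r_s}\frac{X(s-\tau_{r_s})e^{-a_{r_s}X(s-\tau_{r_s})}}{X(s)}ds+\int_0^t\sigma_{r_s}dB(s).
\]
In the case $p_i=0$ for all $i\in\S$, the entire middle integral vanishes, so $\log X(t)=\log\phi(0)-\int_0^t d_{r_s}ds+\int_0^t\sigma_{r_s}dB(s)$. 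Dividing by $t$, using the strong law of large numbers for martingales (the quadratic variation grows at most linearly since $\sigma^2_{r_s}\leqslant\check\sigma^2$), and invoking Assumption (A) to identify $\frac1t\int_0^t d_{r_s}ds\to\sum_{i\in\S}\pi_i d_i$, I get $\limsup_{t\to\infty}\frac1t\log X(t)\leqslant-\sum_{i\in\S}\pi_i d_i<0$ a.s., and hence $X(t)\to0$ a.s.

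For the case $\tau_i=0$ for all $i\in\S$, the delayed argument collapses and the middle integrand becomes $p_{r_s}e^{-a_{r_s}X(s)}\leqslant p_{r_s}$, so
\[
\log X(t)\leqslant\log\phi(0)-\int_0^t\left(d_{r_s}-p_{r_s}\right)ds+\int_0^t\sigma_{r_s}dB(s).
\]
Again dividing by $t$, applying the martingale SLLN and Assumption (A), the ergodic average of $d_{r_s}-p_{r_s}$ converges to $\sum_{i\in\S}\pi_i(d_i-p_i)=\lambda$, yielding $\limsup_{t\to\infty}\frac1t\log X(t)\leqslant-\lambda$ a.s. If $\lambda>0$ this exponential decay forces $X(t)\to0$ a.s. If $\lambda=0$, I cannot conclude decay from the Lyapunov exponent alone; instead I would argue by contradiction in the style of the preceding theorem: suppose $\limsup_{t\to\infty}X(t)>1$ on a set of positive probability, so there is $\varepsilon>0$ and (on that event) a time $T$ with $X(s)\geqslant1+\varepsilon$ on a set of times of positive density; then on those times $e^{-a_{r_s}X(s)}<e^{-a_{r_s}(1+\varepsilon)}$, which makes the ergodic average of $p_{r_s}e^{-a_{r_s}X(s)}-d_{r_s}$ strictly negative, driving $\log X(t)\to-\infty$ and contradicting $\limsup X(t)>1$. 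Hence $\limsup_{t\to\infty}X(t)\leqslant1$ a.s.

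The main obstacle is the borderline case $\lambda=0$: the crude bound $e^{-a_{r_s}X(s)}\leqslant1$ only gives $\limsup\frac1t\log X(t)\leqslant0$, which is not enough to pin down the pathwise limsup of $X(t)$ itself. One must genuinely exploit the strict concavity/monotonicity of $x\mapsto xe^{-ax}$ near the threshold value $1$, most cleanly via a contradiction argument that converts a hypothetical excursion of $X$ above $1$ into a strictly negative drift for $\log X$. Care is also needed to handle the regime-switching average rigorously: one should note that $p_{r_s}e^{-a_{r_s}X(s)}-d_{r_s}\leqslant p_{r_s}e^{-a_{r_s}}\mathbf{1}_{\{X(s)\geqslant1\}}+p_{r_s}\mathbf{1}_{\{X(s)<1\}}-d_{r_s}$ is not directly ergodic because it depends on the path $X$, so the contradiction must be set up carefully so that, on the event of large limsup, the time-average of the drift is controlled below a strictly negative constant. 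The remaining steps—the martingale SLLN bound and the identification of Ces\`aro limits through Assumption (A)—are routine and already appear in the proofs of the earlier theorems.
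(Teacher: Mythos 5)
The paper states this theorem without proof, so there is no argument of the authors' to compare yours against; on its own terms, your treatment of the first two claims is correct and is exactly the routine combination of the representation \eqref{wkll}, the strong law of large numbers for the martingale $\int_0^t\sigma_{r_s}dB(s)$ (whose quadratic variation is at most $\check{\sigma}^2 t$), and the ergodic theorem for $r_t$ under Assumption (A). The genuine gap is in the case $\lambda=0$. Your contradiction argument hinges on the implication ``$\limsup_{t\ra\infty}X(t)>1$ on an event of positive probability $\Rightarrow$ $X(s)\geqslant 1+\varepsilon$ on a set of times of positive density.'' That implication is false: a limsup exceeding $1$ only guarantees excursions above $1+\varepsilon$ along some unbounded sequence of times, which may well have density zero, and on the complementary times the only available bound is $e^{-a_{r_s}X(s)}\leqslant 1$, whose ergodic average returns exactly $-\lambda=0$ rather than a strictly negative constant. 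The contradiction template of the preceding theorem works for the \emph{liminf} precisely because $\liminf_{t\ra\infty}X(t)>N^*$ means $X(t)\geqslant N^*+\varepsilon$ for \emph{all} large $t$; that structural feature does not transfer to a limsup statement, and your sketch does not supply a substitute.

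In fact the assertion $\limsup_{t\ra\infty}X(t)\leqslant 1$ appears to be unprovable as stated. Take a single regime with $\tau=0$ and $d=\delta+\sigma^2/2=p$; then $Y=\log X$ satisfies $dY=p(e^{-ae^{Y}}-1)\,dt+\sigma\,dB(t)$, a one-dimensional diffusion whose drift is bounded, tends to $-p$ as $Y\ra+\infty$, and vanishes integrably as $Y\ra-\infty$. The scale function therefore satisfies $s(-\infty)=-\infty$ and $s(+\infty)=+\infty$, so $Y$ is recurrent on all of $\mathbb{R}$ and $\limsup_{t\ra\infty}X(t)=+\infty$ a.s. What your exponent bound $\limsup_{t\ra\infty}\frac1t\log X(t)\leqslant 0$ does deliver is $\limsup_{t\ra\infty}X(t)^{1/t}\leqslant 1$ a.s., which is presumably what the final inequality of the theorem is meant to say; if so, your first computation already proves it and no delicate contradiction argument is needed.
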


\begin{theo} Assume  $\forall\,i\in S$, $\tau_i=\tau$ and $a_i=a$ are independent of the Markov chain $(r_t)_{t\geqslant0}$, and $\hat{\mu}>0$,  then the solution $X(t)$ of equation (\ref{wk00}) with initial value (\ref{wk01}) has the properties: \[\limsup_{t\ra\infty}\frac1t\log \mathbb{E}[X^2(t)]\leqslant-\kappa<0,\; \limsup_{t\ra\infty}\frac1t\log X(t)\leqslant-\kappa/2<0\;\;\mbox{and}\;\;  \lim_{t\ra\infty}  X(t)=0\;\; a.s., \] where $\kappa\in(0, \hat{\mu})$ is the unique root of the equation $\kappa \vartheta \tau  e^{\kappa\tau}+\kappa= \hat{\mu},$ and $\vartheta>\check{p}$ is a constant.
\end{theo}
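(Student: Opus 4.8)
The plan is to obtain mean-square exponential decay of $X(t)$ by a Lyapunov--Krasovskii estimate built on $V(x)=x^{2}$, and then to upgrade it to the almost sure statements by the same exponential martingale / Borel--Cantelli device used in the proof of Theorem 2.2. First I would apply It\^o's formula to $X^{2}(t)$ along equation \eqref{wk00}; since here $\tau_{i}\equiv\tau$ and $a_{i}\equiv a$, this gives $dX^{2}(t)=\mathcal L X^{2}(t)\,dt+2\sigma_{r_t}X^{2}(t)\,dB(t)$ with $\mathcal L X^{2}(t)=(\sigma_{r_t}^{2}-2\delta_{r_t})X^{2}(t)+2p_{r_t}X(t)X(t-\tau)e^{-aX(t-\tau)}$. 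Using $e^{-aX(t-\tau)}\le1$ together with $2X(t)X(t-\tau)\le X^{2}(t)+X^{2}(t-\tau)$ yields $\mathcal L X^{2}(t)\le-\mu_{r_t}X^{2}(t)+p_{r_t}X^{2}(t-\tau)\le-\hat\mu X^{2}(t)+\check p X^{2}(t-\tau)$, which is exactly where the standing hypothesis $\hat\mu>0$ is used. Passing to expectations, $m(t):=\E[X^{2}(t)]$ obeys $\dot m(t)\le-\hat\mu m(t)+\check p m(t-\tau)$; simultaneously, integrating $dX^{2}$ over $[t-\tau,t]$, discarding the nonnegative cross term and killing the stochastic integral under expectation, one gets a bound $m(t-\tau)\le m(t)+\vartheta\int_{t-\tau}^{t}m(s)\,ds$ for a suitable constant $\vartheta>\check p$. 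This second inequality trades the pointwise delay for a time integral and is what manufactures the factor $\tau$ in the rate equation.

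Since $\hat\mu>0$, the map $\kappa\mapsto\kappa\vartheta\tau e^{\kappa\tau}+\kappa$ is continuous, strictly increasing on $[0,\infty)$, equals $0<\hat\mu$ at $\kappa=0$ and strictly exceeds $\hat\mu$ at $\kappa=\hat\mu$, so $\kappa\vartheta\tau e^{\kappa\tau}+\kappa=\hat\mu$ has a unique root $\kappa\in(0,\hat\mu)$. I would then study a Lyapunov--Krasovskii functional of the form $U(t)=e^{\kappa t}m(t)+\vartheta\kappa\int_{-\tau}^{0}\!\int_{t+\theta}^{t}e^{\kappa(s+\tau)}m(s)\,ds\,d\theta$ (up to the precise weights), differentiate it, insert the two inequalities above for $\dot m(t)$ and $m(t-\tau)$, and use the defining equation of $\kappa$ to cancel the coefficient of $e^{\kappa t}m(t)$ while $\vartheta>\check p$ drives the leftover delay terms to be $\le0$. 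This gives $\dot U(t)\le0$, hence $e^{\kappa t}m(t)\le U(t)\le U(0)=:C$, i.e. $\E[X^{2}(t)]\le Ce^{-\kappa t}$ for all $t\ge0$, so that $\limsup_{t\to\infty}\frac1t\log\E[X^{2}(t)]\le-\kappa<0$.

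For the almost sure claims I would imitate the proof of Theorem 2.2. On each interval $[n,n+1]$, apply It\^o to $\log X(t)$ (or to $e^{\kappa t}X^{2}(t)$), write the martingale part $M(t)$, and use the exponential martingale inequality to bound $\sup_{n\le t\le n+1}\big[M(t)-2\langle M(t),M(t)\rangle\big]$ by $c\log n$ with a probability summable in $n$; Borel--Cantelli then provides a pathwise bound valid for all large $n$. Combining this with Chebyshev's inequality applied to $\E[X^{2}(t)]\le Ce^{-\kappa t}$ and with a uniform bound on $\E\big[\sup_{n\le t\le n+1}X^{2}(t)\big]$ obtained from It\^o and Doob's inequality, one gets $\limsup_{t\to\infty}\frac1t\log X(t)\le-\kappa/2<0$ a.s., whence $\lim_{t\to\infty}X(t)=0$ a.s. Existence, uniqueness and a.s.\ positivity of $X(t)$ are already supplied by Theorem 2.1, so they need not be revisited.

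The step I expect to be the main obstacle is the treatment of the delay term $\check p X^{2}(t-\tau)$: unlike in the ordinary, undelayed setting it cannot simply be absorbed and has to be ``carried back'' to time $t$, and it is precisely this trade-off that forces the transcendental equation $\kappa\vartheta\tau e^{\kappa\tau}+\kappa=\hat\mu$. The delicate bookkeeping is to choose the constant $\vartheta$ and the weights in $U$ so that every residual term ends up nonpositive while the decay rate $\kappa$ stays strictly positive; everything else is routine It\^o calculus or a verbatim repetition of the martingale estimates already carried out for Theorem 2.2.
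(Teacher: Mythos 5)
Your overall architecture (mean-square exponential decay first, then Doob/Chebyshev/Borel--Cantelli to upgrade to the almost sure statements) matches the paper, and the second half of your plan is essentially the paper's, which copies Mao's book. The problem is in the first half. The step $e^{-aX(t-\tau)}\le 1$ combined with $2X(t)X(t-\tau)\le X^2(t)+X^2(t-\tau)$ discards exactly the information that makes the theorem true under the stated hypothesis: it leaves you with the pair
\[
\dot m(t)\le-\hat\mu\,m(t)+\check p\,m(t-\tau),\qquad m(t-\tau)\le m(t)+\vartheta\int_{t-\tau}^t m(s)\,ds,
\]
and this pair does not imply exponential decay. The hypothesis is only $\hat\mu>0$, not $\hat\mu>\check p$; when $\check p\ge\hat\mu$ (e.g.\ a single state with $\delta=10$, $\sigma^2=1$, $p=15$, so $\hat\mu=4<15=\check p$) the function $m(t)=e^{\lambda t}$ with $\lambda>0$ and $\tau$ small satisfies both displayed inequalities, so no Lyapunov--Krasovskii functional built only on these two facts can yield $\dot U\le0$. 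Concretely, if you carry your computation through with the double-integral functional, the coefficient of $e^{\kappa t}m(t)$ comes out as $\kappa-\hat\mu+\check p+\check p\vartheta\tau e^{\kappa\tau}$, which is nonpositive only if $\hat\mu>\check p$ and in any case does not reproduce the rate equation $\kappa\vartheta\tau e^{\kappa\tau}+\kappa=\hat\mu$.

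The missing idea is the paper's way of splitting the cross term: apply $2uv\le u^2+v^2$ with $u=X(t)$ and $v=X(t-\tau)e^{-aX(t-\tau)}$, so that
\[
2p_{r_t}X(t)X(t-\tau)e^{-aX(t-\tau)}\le p_{r_t}X^2(t)+p_{r_t}\,g\bigl(X(t-\tau)\bigr),\qquad g(x)=x^2e^{-2ax},
\]
and take the Krasovskii functional to be $V(x,t)=x^2+\vartheta\int_{t-\tau}^tg(x(s))\,ds$. The point is that $g$ is bounded (by $(ae)^{-2}$): the delayed term $p_{r_t}g(X(t-\tau))-\vartheta g(X(t-\tau))$ is nonpositive precisely because $\vartheta>\check p$, and the remaining contribution $\kappa\vartheta\int_0^Te^{\kappa t}\int_{t-\tau}^tg(X(s))\,ds\,dt$ is handled by the Fubini interchange, giving $\kappa\vartheta\tau e^{\kappa\tau}\int_0^Te^{\kappa s}X^2(s)\,ds$ plus a finite constant from the initial segment --- this is where the factor $\kappa\vartheta\tau e^{\kappa\tau}$ in the rate equation actually originates. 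Your treatment of the almost sure part is fine once $\E[X^2(t)]\le Ce^{-\kappa t}$ is in hand, so the repair is local: redo the cross-term estimate keeping the exponential inside the square rather than bounding it by $1$ at the outset.
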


\begin{proof}
Let $e^{\kappa t}V(x,t)=e^{\kappa t}[x^2+\vartheta\int^t_{t-\tau}g(x(s))ds]$, where $g(x)=x^2e^{-2ax}$,  by applying It$\hat{o}$ formula  we have
\[
\aligned
d[e^{\varepsilon t} V(X(t),t)] 
\leqslant&e^{\kappa t}\biggl[\kappa\vartheta\int^t_{t-\tau}g(X(s))ds+(p_{r_t}+\sigma^2_{r_t}-2\delta_{r_t}+\kappa)X^2(t)\biggr]dt+2e^{\kappa t}\sigma^2_{r_t}X^2(t)dB(t)
\endaligned
\]
Integrating two sides of this inequality from $0$ to $T>0$ and taking expectation we obtain
\begin{equation}\label{ad}
e^{\kappa T} \mathbb{E} V(X(T),T)\leqslant V(\phi(0),0)+\mathbb{E}\int_0^Te^{\kappa t}\biggl[\kappa\vartheta\int^t_{t-\tau}g(X(s))ds+(p_{r_t}+\sigma^2_{r_t}-2\delta_{r_t}+\kappa)X^2(t)\biggr]dt.
\end{equation}
On the other hand, we have
\[\aligned
\int_0^Te^{\kappa t}\int^t_{t-\tau}g(X(s))dsdt&=\int_{-\tau}^T\biggl[\int_{s\vee0}^{(s+\tau)\wedge T}e^{\kappa t}dt\biggr]g(X(s))ds\leqslant\tau e^{\kappa\tau}\int_{-\tau}^T e^{\kappa s}g(X(s))ds\\
&\leqslant\tau e^{\kappa\tau}\int_{0}^T e^{\kappa s}X^2(s)ds+\tau^2 e^{\kappa\tau-2}/a^2,
\endaligned
\] which together with \eqref{ad} gives
\[
e^{\kappa T} \mathbb{E} V(X(T),T)\leqslant V(\phi(0),0)+\kappa\vartheta\tau^2 e^{\kappa\tau-2}/a^2+\mathbb{E}\int_0^Te^{\kappa t}(\kappa \vartheta \tau  e^{\kappa\tau}+\kappa-\hat{\mu})X^2(t)dt\leqslant V(\phi(0),0)+\kappa\vartheta\tau^2 e^{\kappa\tau-2}/a^2.
\]
Hence, \[e^{\kappa T} \mathbb{E}[X^2(T)]\leqslant V(\phi(0),0)+\kappa\vartheta\tau^2 e^{\kappa\tau-2}/a^2,\]
 which implies $\limsup_{t\ra\infty}\frac1t\log(\mathbb{E}[X^2(t)])\leqslant-\kappa.$
 \par The rest of the proof uses the same arguments as those in the proof of Mao \cite{mao} (p.373-374).
For completeness, we include the details here.
For arbitrary $\epsilon\in(0, \kappa/2)$, there must exist a constant $K>0$ so that
\[\mathbb{E}[X^2(t)]\leqslant Ke^{-(\kappa-\epsilon)t}, \;\forall\, t\geqslant-\tau.\] For $n=1,2,...$, by the Doob martingale inequality and Holder inequality, we get
\[\mathbb{E}[\sup_{n\leqslant t\leqslant n+1}X^2(t)]\leqslant 3\mathbb{E}[X^2(n)]+C\int_n^{n+1} \left(\mathbb{E}[X^2(t)]+\mathbb{E}[X^2(t-\tau)]\right)dt,\]
 where $C$ is  positive constant independent of $n$, and allowed to be different in deferent lines. Therefore, we obtain
 $\mathbb{E}[\sup_{n\leqslant t\leqslant n+1}X^2(t)]\leqslant Ce^{-(\kappa-\epsilon)n},$ which together with Chebyshev's theorem yields
 \[\mathbb{P}\{\omega: \sup_{n\leqslant t\leqslant n+1}X^2(t)> e^{-(\kappa-2\epsilon)n}\}\leqslant C e^{-\epsilon n}.\]
  It follows from the Borel-Cantelli lemma that for almost all $\omega\in\Omega$ there is a random integer $n_0(\omega)$ such that for all $n\geqslant n_0,$
\[\sup_{n\leqslant t\leqslant n+1}X^2(t)\leqslant e^{-(\kappa-2\epsilon)n},\]
 which yields $\limsup_{t\ra\infty}\frac1t\log X(t)\leqslant-\kappa/2+\epsilon$ a.s. The arbitarity of $\epsilon$  implies the desired assertion.
\end{proof}
\section{Examples}
In this section  an example is given  to check the results obtained in section 2 by numerical simulation.
\par Let $\S=\{1,2,3\}$, $Q=[-10, 4, 6; 2, -3, 1; 3, 5, -8]$, $\delta=[2, 1, 4]$, $p=[4, 2, 8]$,
$\tau=[1, 1, 1]$,  $a=[0.4, 0.2, 0.3]$, $\sigma=[1.5, 2, 3]$,
and $\phi(t)=1$, $t\in[-1, 0]$, then the Markov chain $r_t$ is irreducible and has a unique stationary distribution
$\pi=(0.1845, 0.6019, 0.2136)$ and
one can see that $\delta_1>\sigma_1^2/2,$ $\delta_2=\sigma_2^2/2,$ and $\delta_3<\sigma_3^2/2$. It follows from Theorem 2.1 that the solution of equation \eqref{wk00} is global existence and positive almost surely on $\R^+_0$, Figure 2(left) shows this. Meanwhile, we get
$\limsup_{t\rightarrow \infty} \mathbb{E}[X(t)]\leqslant  1.1221$
and $\limsup_{t\rightarrow \infty} \mathbb{E}[X^{7/5}(t)]\leqslant 0.3713$. Furthermore, from Theorem 2.3 we have $C\thickapprox[2.1022,3.6788,10.3229]$, $d=[3.125, 3, 8.5]$, $N^*= 1.1883$ and the estimation of the sample Lyapunov exponent:
$-4.1978\leqslant\liminf_{t\rightarrow\infty}\frac 1t{\log X(t)}\leqslant\limsup_{t\rightarrow\infty}\frac1t{\log X(t)}\leqslant2.4035$.
One can see from Figure 2(right) that the  value of $\frac1t\log X(t)$ oscillates up and down at zero as $t\rightarrow \infty$, which yields that the sample path of $X(t)$ oscillates up and down at 1 as $t\rightarrow \infty$. Thus, $X(t)\nrightarrow 0$ as $t\rightarrow \infty$, that is, the population  will neither go to extinction finally nor explode in finite time.
\par In order to verify the effectiveness of Theorem 2.6, we reset $p=[0.2, 0.2, 0.4]$, $a=[0.4, 0.4, 0.4]$, $\sigma=[1.5, 1, 2.5]$ then  $\hat\mu>0$  holds. One can see from Figure 1(right) that $X(t)\ra0$ and $\frac1t\log X(t)<0$ as $t\ra\infty.$
\vspace{-1mm}
\begin{center}
\scalebox{0.52}
{\includegraphics{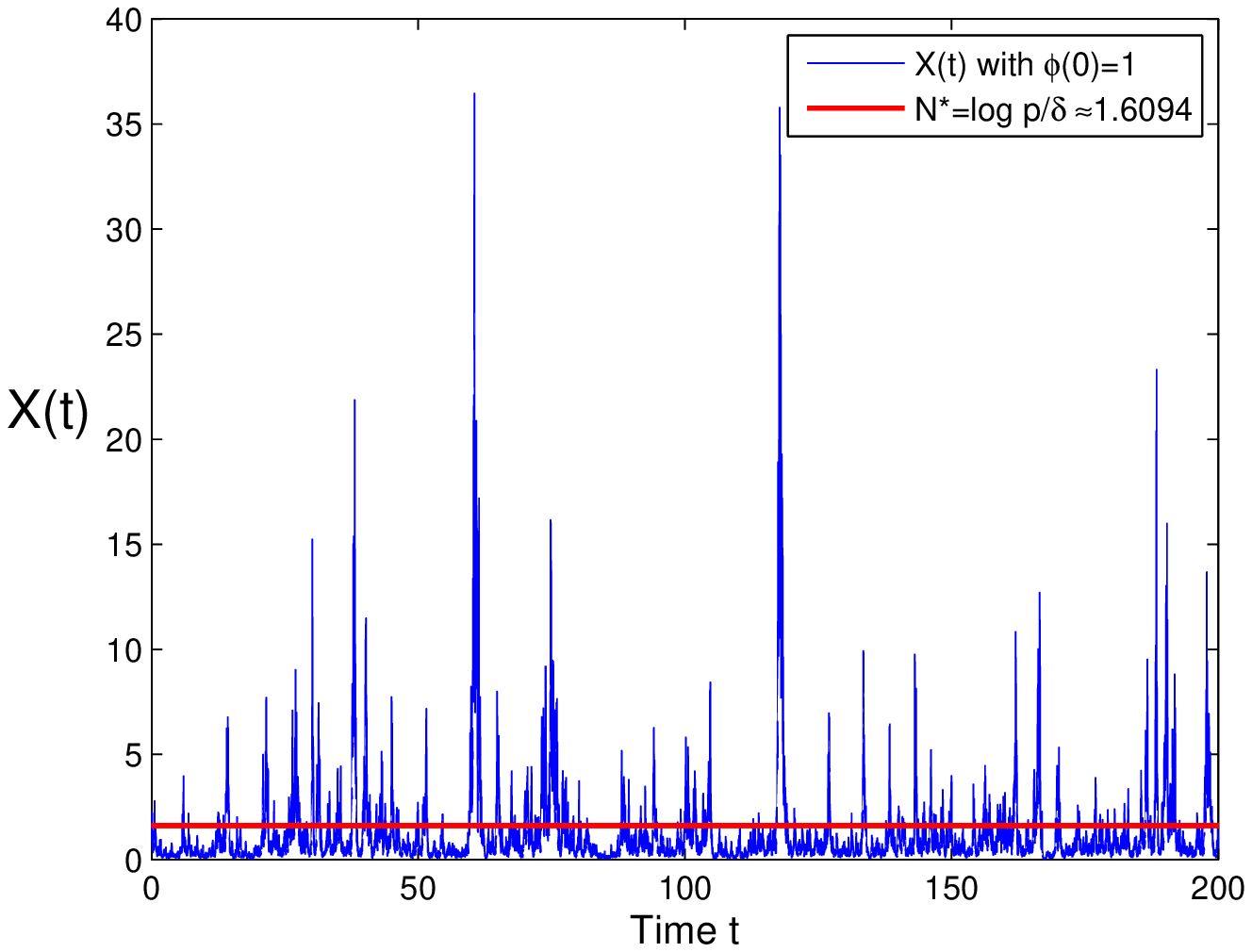}}\scalebox{0.52}{\includegraphics{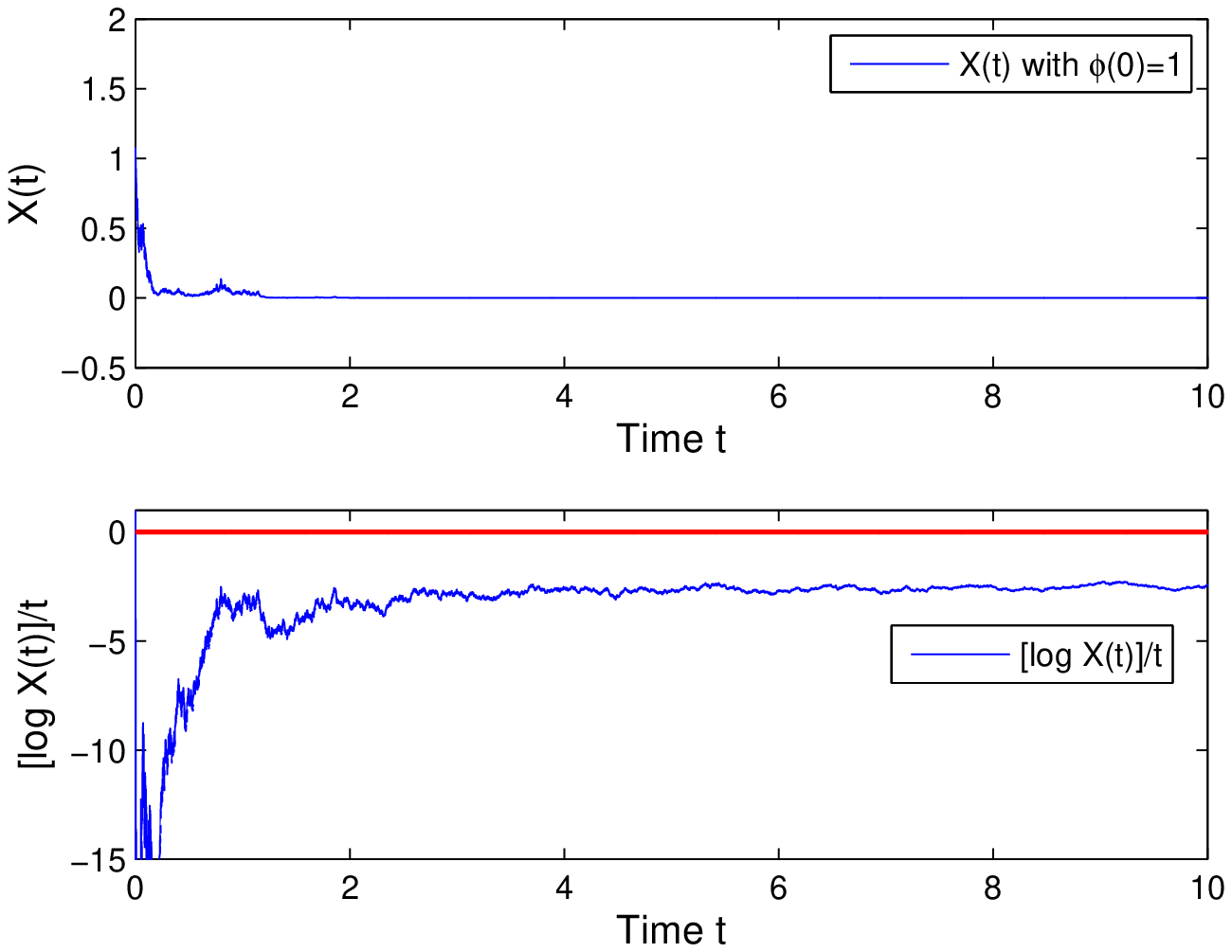}}\\
\footnotesize {\textbf{Figure 1.}\;\;
Left: The solution $X(t)$ of equation \eqref{wk0} with initial value $\phi(t)=1$ for $t\in[-1,0]$ and the step $\Delta=1e-4$.
  Right(up): Simulation of the processes $(X(t), r_t)$ with $r_0=3$ and  $\hat\mu>0$. Right(down): $[\log X(t)]/t$ of the sample path $X(t)$.}
\end{center}
\begin{center}
\scalebox{0.52}
{\includegraphics{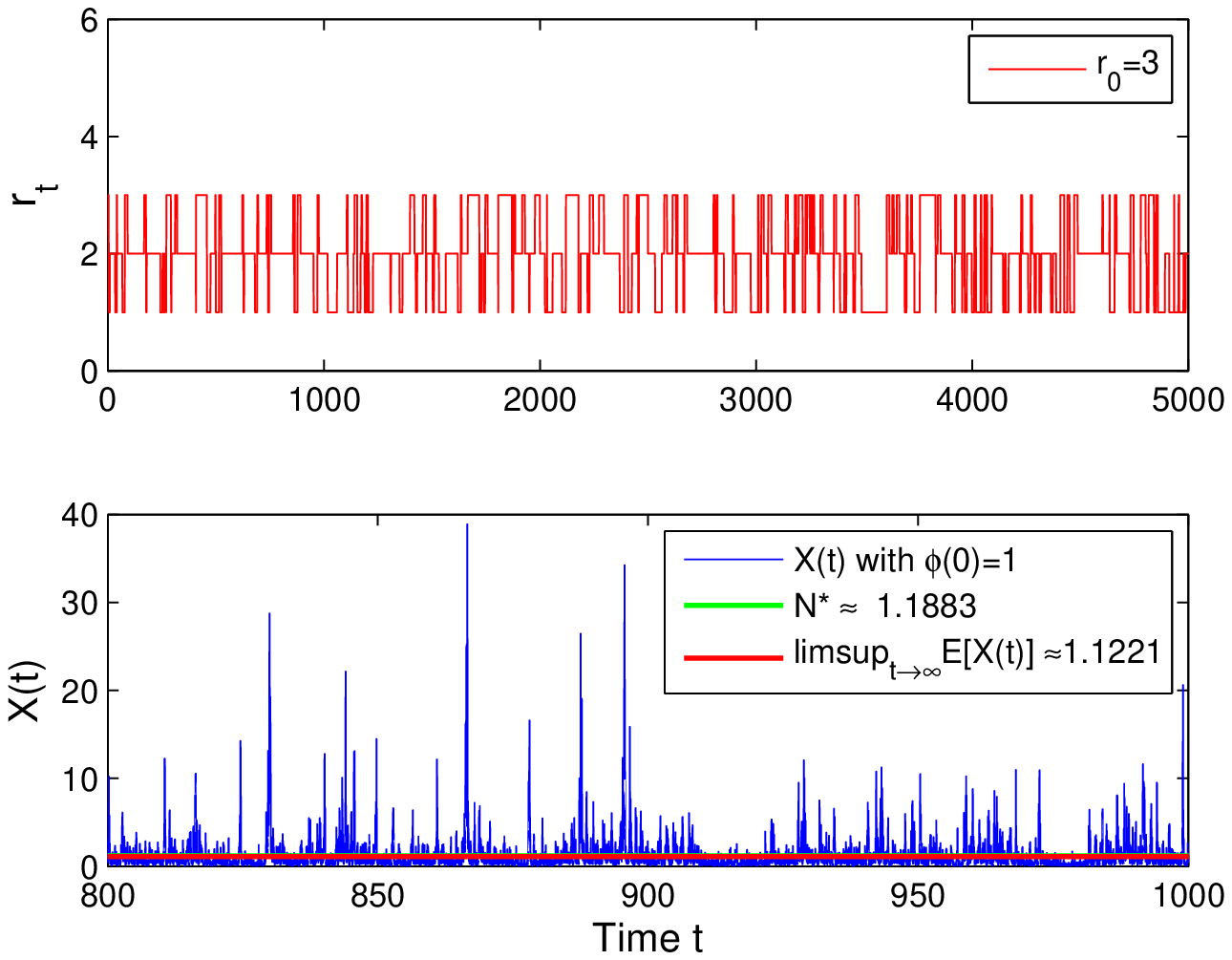}}\scalebox{0.52}{\includegraphics{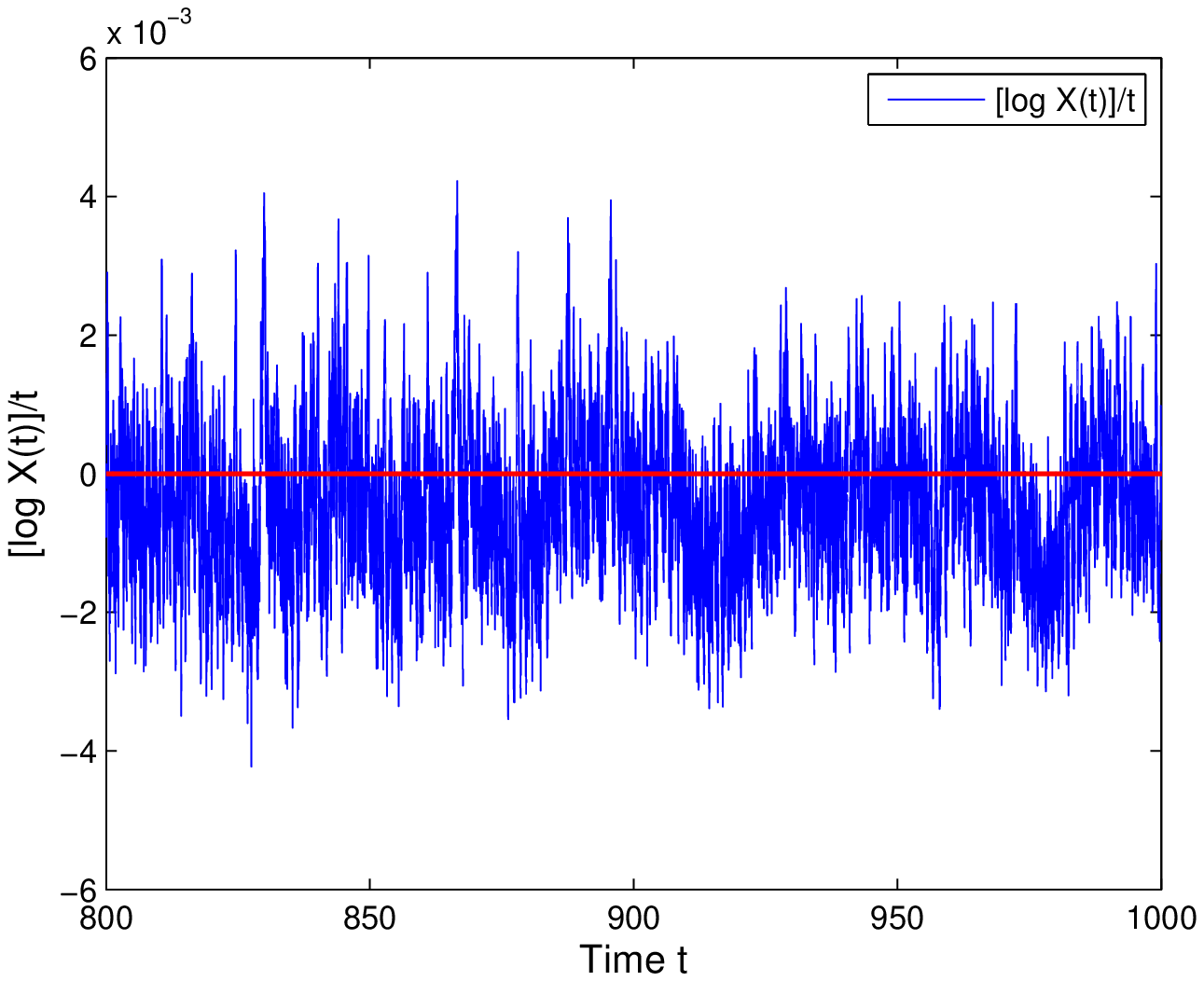}}\\
\footnotesize {\textbf{Figure 2.}\;\; Here the step $\Delta=1e-4$. Left: Simulation of the processes $(X(t), r_t)$ with $r_0=3,$} and $\phi(t)=1$ for $t\in[-1,0]$ (In order to see the line clearly, we only present the first $5\times 10^3$ states of $r_t$, and just show the values of $X(t)$ on interval $[800, 1000]$). Right: $[\log X(t)]/t$ of the sample path $X(t)$ 
(For clarity,  we just print the values of $[\log X(t)]/t$ on interval $[800, 1000]$).
\end{center}

\section{Conclusion}
In this paper, we consider a stochastic Nicholson's blowflies delay differential equation with regime switching. To the best of our knowledge it is the first time that regime switching is considered in Nicholson's blowflies model. By choosing several suitable $V$
functions we prove that the solution $X(t)$ of the stochastic Nicholson's blowflies model is globally existent, which is also positive on $\R^+_0$ almost surely, and present some estimation of the solution $X(t)$ to model (\ref{wk00})-(\ref{wk01}).   The results obtained in this paper extend some corresponding  results in literature, especially we removed the restricted condition $\delta>\sigma^2/2$, which is needed in \cite{WangW}. It is worth to point out that model \eqref{wk00}  can be extended to a general case:
\[
dX(t)=\left[-\delta_{r_t} X(t)+G(X(t-\tau_{r_t}), r_t)\right]dt+\sigma_{r_t} X(t)d B(t),
\]
where $G: \mathbb{R}\times \S\rightarrow\mathbb{R}^+_0$ is uniformly bounded, and the methods applied in Theorems 2.1$-$2.5 are also suitable for it.
\par  For model \eqref{wk00} with initial value \eqref{wk01}, there are still some interesting problems left:
\par 1) Find  conditions for the almost surely strong (weak) persistence of the population, that is, under what conditions  $\liminf_{t\ra \infty}$ $ X(t)>0$ a.s. ($\limsup_{t\ra\infty} X(t)>0$ a.s.); or more accurately, estimate the maximum value of $K\in \mathbb{R}^+$ such that  $\liminf_{t\ra \infty}X(t)\geqslant K$ a.s.
\par 2) In the case of $p>0$, $\tau>0$, the delay $\tau$ and parameter $a$ vary along the Markov chain $(r_t)_{t\geqslant0}$, that is, they depend on the Markov chain $(r_t)_{t\geqslant0}$. Find conditions for  the upper boundedness of the population, that is, under what conditions  there exists a $M\in \mathbb{R}^+$ such that $\limsup_{t\ra\infty} X(t)\leqslant M$ a.s., and give the estimation of this upper bound $M$. Find conditions for the negativeness of the Lyapunov exponent of $X(t)$, that is  $\limsup_{t\ra\infty}\frac1t\log X(t)<0$ a.s.

\vspace{0.03cm}
{\footnotesize
}


\begin{thebibliography}{99}
\setlength{\itemsep}{0pt}
\setlength{\parskip}{0pt}\baselineskip=4.5mm
\bibitem{Gurney} W.S.C. Gurney, S.P. Blythe, R.M. Nisbet, Nicholson¡¯s blowflies revisited, Nature 287 (1980) 17-21.


\bibitem{Berezansky} L. Berezansky, E. Braverman, L. Idels, Nicholson¡¯s blowflies differential equations revisited:
Main results and open problems, Appl. Math. Model. 34 (2010) 1405-1417.

\bibitem{WangW} W.T. Wang, L.Q. Wang, W. Chen, Stochastic Nicholson¡¯s blowflies delayed differential equations, Appl. Math. Lett. 87 (2019) 20-26.

\bibitem{mao-06} X.R. Mao, C.G. Yuan, Stochastic differential equations with Markovian switching,  Imperial College Press, 2006.

\bibitem{kh-07} R.Z. Khasminskii, C. Zhu, G.G. Yin, Stability of regime-switching diffusions,
Stochastic Process. Appl. 117 (2007) 1037-1051.

\bibitem{yz-09} G.G. Yin, C. Zhu, Hybrid Switching Diffusions, Properties and Applications, Springer New York, 2009.

\bibitem{mao} X.R.
Mao, Stochastic Differential Equations and Applications, Horwood,
Chichester, 1997.
%
%
%
%
%
%
%
%
%
%
%
%
%
%
%
%
%
%
%
%
%
%
%
%
%
%
%

%
%
%
%
%
%

\end{thebibliography}
\end{document}